\title{$\overline{M}_{1,{n}}$ is usually not uniruled in characteristic $p$}
\author{\vspace{0cm} Will Sawin}
\institution{Department of Mathematics, Columbia University, New York, NY }\\
\email{sawin@math.columbia.edu}}
\date{\vspace{-5ex}} 
\journal{\'Epijournal de G\'eom\'etrie Alg\'ebrique} 
\newtheorem{theo}{Theorem}[section]
\newtheorem{cor}[theo]{Corollary}
\newtheorem{prop}[theo]{Proposition}
\newtheorem{lem}[theo]{Lemma}
\newtheorem{remark}[theo]{Remark}
\newtheorem{question}[theo]{Question}
\newtheorem{defi}[theo]{Definition}
\def \Ql {\mathbb Q_\ell}
\def \Frob {\operatorname{Frob}}
\def \Spec {\operatorname{Spec}}
\def \Sym {\operatorname{Sym}}
\def \oo {\operatorname{tdF}}
\def \Fq {\overline{\mathbb F}_q}
\begin{document}


\maketitle



\begin{prelims}


\def\abstractname{Abstract}
\abstract{Using \'{e}tale cohomology, we define a birational invariant for varieties in characteristic $p$ that serves as an obstruction to uniruledness -- a variant on an obstruction to unirationality due to Ekedahl. We apply this to $\overline{M}_{1,n}$ and show that $\overline{M}_{1,n}$ is not uniruled in characteristic $p$ as long as $n \geq p \geq 11$. To do this, we use Deligne's description of the \'{e}tale cohomology of $\overline{M}_{1,n}$ and apply the theory of congruences between modular forms.}

\keywords{Unirational, uniruled, characteristic $p$, moduli of curves, elliptic curves, modular forms, \'etale cohomology}

\MSCclass{14M20}

\vspace{0.15cm}

\languagesection{Fran\c{c}ais}{%

\textbf{Titre. $\overline{M}_{1,n}$ n'est g\'en\'eralement pas unir\'egl\'ee en caract\'eristique $p$} \commentskip \textbf{R\'esum\'e.} Faisant usage de la cohomologie \'etale, nous d\'efinissons un invariant birationnel pour les vari\'et\'es en caract\'eristique $p$ qui constitue une obstruction \`a l'unir\'eglage -- une variante de l'obstruction \`a l'unirationalit\'e formul\'ee par Ekedahl. Nous appliquons ce crit\`ere \`a l'espace $\overline{M}_{1,n}$ et montrons qu'il n'est pas unir\'egl\'e en caract\'eristique $p$ d\`es que $n \geq p \geq 11$. Pour cela, nous utilisons la description de Deligne de la cohomologie \'etale de $\overline{M}_{1,n}$ et nous appliquons la th\'eorie des congruences entre formes modulaires.}

\end{prelims}


\newpage

\setcounter{tocdepth}{1} \tableofcontents

\setcounter{section}{0}

\bigskip
\bigskip
\bigskip

The aim of this paper is to show that $\overline{M}_{1,n}$ is not uniruled in characteristic $p$ whenever $n \geq p-1 \geq 11$ or $p=11$ and $n \geq 11$ (Theorem \ref{main2}). We will also discuss the related concept of unirationality.

For smooth projective varieties $X$ in characteristic zero, $H^0(X,\Omega^i_X)$ is known to be a birational invariant for all $i$ by \cite[Corollary 2 on p. 153]{hironaka64}. If $f: Y \to X$ is a separable morphism, then \[\dim H^0(Y,  \Omega^i_Y )\geq \dim H^0(X,\Omega^i_X ) ,\] so if $X$ is unirational in characteristic zero then $H^0(X,\Omega^i_X)$ vanishes for $i>0$ and if $X$ is separably uniruled then $H^0(X,\Omega^i _X)$ vanishes for $i= \dim X$. 
However $X$ can be inseparably unirational in characteristic $p$ even if $H^0(X,\Omega^i_X)\neq 0$, for instance when $X$ is a supersingular Kummer surface and $i=2$ \cite[Theorem 1.1]{shioda77}.

Using \'{e}tale cohomology, Ekedahl \cite{ekedahl} defined a birational invariant that fixes this problem. This is unsurprising as \'{e}tale cohomology is invariant under inseparable morphisms. Roughly speaking, his invariant measures the multiplicity of all eigenvalues of Frobenius on the \'{e}tale cohomology with compact supports, except for those eigenvalues that ``look like" the eigenvalues of Frobenius with a lower-dimensional variety - because we quotient by the contributions of lower-dimensional varieties, we obtain a birational invariant.

We will apply this invariant to $\overline{M}_{1,n}$. Due to the close relationship between the cohomology of $\overline{M}_{1,n}$ and modular forms, we are able to show that $\overline{M}_{1,n, \mathbb F_p}$ is not unirational, or even uniruled, whenever there is a $p$-ordinary cusp form of level $1$ and weight $k$ for some $k \leq n+1$ (Theorem \ref{main1}). Applying the classical theory of modular forms modulo $p$, we show that $\overline{M}_{1,n, \mathbb F_p}$ is not uniruled for $n \geq p-1 \geq 11$ or $p=11$ and $n\geq 11$ (Theorem \ref{main2}). However, for a given value of $p$, it is likely possible to get a much better value of $n$ by explicitly computing coefficients of modular forms until an ordinary one is found, except for $p=2,3,5,7$ where no such modular forms exist.

This invariant is defined for varieties that are not necessarily smooth or proper. This enables us to avoid the use of stacks when working with $\overline{M}_{1,n}$, though we do not expect there would be any great difficulty in extending these results to Deligne-Mumford stacks. In the smooth and projective case, this obstruction to unirationality is the same as that given by Esnault \cite[Theorem 1.1]{esnault03}, who also showed it was an obstruction even to the weaker property of having a trivial Chow group of zero-cycles.

The unirationality of $\overline{M}_{1,n}$ was completely understood in characteristic zero. It is unirational for $n \leq 10$, because nine general points in $\mathbb P^2$ determine a genus one curve with nine marked points, and the hyperplane class minus twice the first point gives a tenth marked point \cite{chowpointed}. It is not even uniruled for $n >10$, because its Kodaira dimension is zero for $n=11$ and one for $n>11$ \cite[Theorem 3]{BiniFontanari}. In characteristic $p$, the Kodaira dimension is not an obstruction to unirationality, so only the $n \leq 10$ case remains valid, and to my knowledge this is all that was known.

I would like to thank Daniel Litt, Bhargav Bhatt, John Lesieutre, and Remy van Dobben de Bruyn for helpful conversations, and the anonymous referee for useful comments.

We will always take a variety to be a geometrically integral separated scheme of finite type over a field, and the field will always be a finite field $\mathbb F_q$.

\section{A Birational Invariant}

We will actually define an invariant that is a slight variant of Ekedahl's. To make \'{e}tale cohomology into a birational invariant, we simply quotient by the maximum subspace which could come from a variety of lower dimension. We define this using the (geometric) Frobenius action:

\begin{defi} 
{\rm Let $X$ be a variety over $\mathbb F_q$ of dimension $d$. Let $H^i_{\oo} (X)$ be the quotient of $H^i_c(X_{\overline{\mathbb F}_q}, \Ql)$ by the maximal $\Frob_q$-stable subspace on which the eigenvalues of $\Frob_q$ all divide $q^{d-1}$ in the ring of algebraic integers. (The eigenvalues lie in the ring of algebraic integers by \cite[XXI Corollary 5.5.3]{sga7-ii}.)}  \end{defi}

The notation $\oo$ is short for top-dimensional Frobenius - i.e. the part of cohomology where Frobenius does not act by eigenvalues that could come from a lower-dimensional variety via the excision exact sequence.

Ekedahl's invariant \cite[Equation (1)]{ekedahl} can be viewed as the formal sum of the eigenvalues of Frobenius acting on $H^i_{\oo}(X)$ in the free group on the elements of $\overline{\mathbb Q}$, with multiplicity the multiplicity of the eigenvalue. This gives the same information as the characteristic polynomial of Frobenius acting on this vector space.  The vector space carries slightly more information, which might prove useful, but analyzing it is not any more difficult. Indeed, many of our proofs are essentially the same as Ekedahl's, though some are new. Because the proofs are so short, we felt it was worth repeating them in this different context.

\begin{prop}\label{birational} Let $X$ and $Y$ be two varieties that are birationally equivalent. Then

\[ H^i_{\oo}(X) \cong H^i_{\oo}(Y).\]

\end{prop}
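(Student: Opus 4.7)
My plan is to reduce to the case of an open inclusion. Since $X$ and $Y$ are birationally equivalent, they contain isomorphic dense open subschemes; call this common open $U$. By symmetry and transitivity of isomorphism, it suffices to prove $H^i_{\oo}(U) \cong H^i_{\oo}(X)$ whenever $U \subseteq X$ is a dense open, and then apply this to both $U \subseteq X$ and $U \subseteq Y$.

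Set $Z := X \setminus U$, which has dimension at most $d-1$ since $X$ is an integral variety of dimension $d$. The main tool is the excision long exact sequence for compactly supported cohomology,
\[
\cdots \to H^{i-1}_c(Z_{\Fq},\Ql) \to H^i_c(U_{\Fq},\Ql) \xrightarrow{\alpha} H^i_c(X_{\Fq},\Ql) \xrightarrow{\beta} H^i_c(Z_{\Fq},\Ql) \to \cdots,
\]
combined with the input that, because $\dim Z \leq d-1$, every $\Frob_q$-eigenvalue on each $H^j_c(Z_{\Fq},\Ql)$ divides $q^{d-1}$ in the ring of algebraic integers (the same SGA7 XXI.5.5.3 result cited in the definition, applied componentwise to $Z$ via one further round of excision).

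Write $V^i(X) \subseteq H^i_c(X_{\Fq},\Ql)$ and $V^i(U) \subseteq H^i_c(U_{\Fq},\Ql)$ for the maximal $\Frob_q$-stable subspaces of the definition---equivalently, the sums of generalized $\Frob_q$-eigenspaces for eigenvalues dividing $q^{d-1}$. I would show that $\alpha$ induces an isomorphism $H^i_c(U)/V^i(U) \xrightarrow{\sim} H^i_c(X)/V^i(X)$. For surjectivity, note that $\mathrm{coker}(\alpha)$ embeds into $H^i_c(Z)$, so all of its eigenvalues divide $q^{d-1}$; hence $\mathrm{im}(\alpha)$ already contains every generalized eigenspace of $H^i_c(X)$ for eigenvalues \emph{not} dividing $q^{d-1}$, and therefore surjects onto $H^i_c(X)/V^i(X)$. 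For injectivity, the subspace $\alpha^{-1}(V^i(X))$ sits in a short exact sequence whose kernel is $\mathrm{im}(H^{i-1}_c(Z) \to H^i_c(U))$ and whose cokernel injects into $V^i(X)$; since both the kernel and cokernel have eigenvalues dividing $q^{d-1}$, so does $\alpha^{-1}(V^i(X))$ itself, forcing $\alpha^{-1}(V^i(X)) \subseteq V^i(U)$ by maximality.

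There is no serious obstacle here; the argument follows the standard ``excision plus weights'' template. The one point that demands a little care is the bookkeeping in the injectivity step: one must verify that the eigenvalue-divisibility condition passes through extensions, not just direct sums. This is immediate once one notes that in any short exact sequence $0 \to A \to B \to C \to 0$ of finite-dimensional $\Frob_q$-representations, the characteristic polynomial of $\Frob_q$ on $B$ is the product of those on $A$ and $C$, so the set of eigenvalues on $B$ is the union of those on $A$ and $C$.
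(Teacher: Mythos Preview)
Your proof is correct and follows essentially the same approach as the paper: reduce to an open immersion $U \hookrightarrow X$, use the excision sequence, and invoke \cite[XXI, 5.5.3(iii)]{sga7-ii} for the complement $Z$ to kill the outer terms modulo the ``eigenvalues dividing $q^{d-1}$'' subspace. The paper compresses the surjectivity/injectivity check into a single sentence, whereas you spell it out via the characteristic-polynomial-multiplicativity argument; this extra detail is helpful and does no harm.
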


\begin{proof} Since every birational equivalence is the composition of an open immersion and the inverse of an open immersion, it suffices to prove this when $Y$ is an open subset $U$ of $X$.
Let $Z$ be the complement of $U$ in $X$. Excision \cite[5.1.16.3]{sga4-3} gives an exact sequence
\[ H^{i-1}_c(Z_{\overline{\mathbb F}_q}, \Ql) \to H^i_c(U_{\overline{\mathbb F}_q}, \Ql) \to H^i_c(X_{\overline{\mathbb F}_q}, \Ql) \to H^i_c(Z_{\overline{\mathbb F}_q}, \Ql) .\]
$Z$ has dimension at most $d-1$, so by \cite[XXI Corollary 5.5.3(iii)]{sga7-ii}, all the eigenvalues of $\Frob_q$ acting on its compactly supported cohomology are algebraic integers dividing $q^{d-1}$. Thus modulo the maximal subspace on which the eigenvalues are algebraic integers dividing $q^{d-1}$, the excision map 
\[H^i_c(U_{\overline{\mathbb F}_q}, \Ql) \to H^i_c(X_{\overline{\mathbb F}_q}, \Ql)\]
is an isomorphism, hence the induced map 
\[ H^i_{\oo}(U) \to H^i_{\oo}(X) \]
is an isomorphism. \hfill $\Box$ \end{proof}

Furthermore:

\begin{prop}\label{covering} Let $Y$ and $X$ be two varieties of the same dimension and let $f: Y \to X$ be a dominant rational map. Then $ H^i_{\oo}(X) $ is a summand of $H^i_{\oo}(Y)$.
\end{prop}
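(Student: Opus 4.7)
The plan is to reduce $f$ to a finite \'{e}tale morphism and then split off the cohomology of $X$ from that of $Y$ via the trace map. By Proposition \ref{birational}, both $H^i_{\oo}(X)$ and $H^i_{\oo}(Y)$ are insensitive to passing to nonempty opens, so I may freely shrink $X$ and $Y$. Because $f$ is dominant and $\dim X = \dim Y$, it is generically finite, and after shrinking I may assume $f : Y \to X$ is a finite surjective morphism. To strip away any inseparability, let $L$ be the separable closure of the function field of $X$ inside the function field of $Y$ and let $Y' \to X$ be the normalization of $X$ in $L$, so that $f$ factors as $Y \to Y' \to X$ with $Y \to Y'$ finite and purely inseparable. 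Such a morphism is a universal homeomorphism, hence induces an isomorphism on \'{e}tale cohomology; therefore $H^i_c(Y_{\Fq}, \Ql) \cong H^i_c(Y'_{\Fq}, \Ql)$ as $\Frob_q$-modules, and in particular $H^i_{\oo}(Y) \cong H^i_{\oo}(Y')$. Shrinking $X$ further, I may also assume $Y' \to X$ is finite \'{e}tale of some degree $n$.

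Given a finite \'{e}tale map $g : Y' \to X$ of degree $n$, the composition of the unit $\Ql \to g_* \Ql$ with the trace $g_* \Ql \to \Ql$ equals multiplication by $n$, which is invertible in $\Ql$. This exhibits $\Ql$ as a $\Frob_q$-equivariant direct summand of $g_* \Ql$ on $X$. Since $g$ is finite, and hence proper, $g_! = g_*$, so applying $H^i_c(X_{\Fq}, -)$ realizes $H^i_c(X_{\Fq}, \Ql)$ as a $\Frob_q$-stable direct summand of $H^i_c(Y_{\Fq}, \Ql)$; call a complementary summand $C$.

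Finally I would push this splitting through to the top-dimensional Frobenius quotient. For any finite-dimensional $\Frob_q$-representation $V$ over $\Ql$, the maximal subrepresentation $M_V$ on which all eigenvalues of $\Frob_q$ divide $q^{d-1}$ in the ring of algebraic integers is, after base change to $\overline{\Ql}$, the direct sum of the generalized eigenspaces whose eigenvalue has that divisibility property; in particular the formation of $M_V$ is additive along direct sum decompositions of $\Frob_q$-modules. Applied to $H^i_c(Y_{\Fq}, \Ql) = H^i_c(X_{\Fq}, \Ql) \oplus C$, this yields $H^i_{\oo}(Y) = H^i_{\oo}(X) \oplus (C / M_C)$, which is the desired summand statement. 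The main subtlety to handle carefully is the inseparable factorization: I need to check that, after suitable shrinking, $Y \to Y'$ really is a finite universal homeomorphism (so induces an isomorphism on \'{e}tale cohomology) and $Y' \to X$ really is finite \'{e}tale, so that the trace argument is available; everything else is a formal manipulation of $\Frob_q$-modules.
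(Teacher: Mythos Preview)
Your argument is correct and follows the same overall architecture as the paper's proof: use Proposition~\ref{birational} to shrink until $f$ is a finite morphism, then split $\Ql$ off of $f_!\Ql$ via adjunction and trace, and finally observe that the passage $V \mapsto V/M_V$ respects direct sums. The one genuine difference is how you treat the finite case. The paper applies the trace map directly for an arbitrary finite morphism, citing \cite[XVI, Proposition~6.2.5]{sga4-3} for the fact that the composite $\Ql \to Rf_!\Ql \to \Ql$ is multiplication by $\deg f$; no \'etaleness is needed, so no inseparable factorization is required. You instead factor $f$ as a purely inseparable piece $Y \to Y'$ (handled by invariance of \'etale cohomology under universal homeomorphisms) followed by a generically \'etale piece $Y' \to X$ (shrunk to be finite \'etale so that the trace argument becomes the elementary one). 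Your route trades one nontrivial citation for another: you avoid the general finite trace formula at the cost of the factorization and the universal-homeomorphism invariance. Both are short; the paper's version is slightly more streamlined, while yours has the virtue that the trace step is the most familiar finite-\'etale case.
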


This proposition, and its proof, are a variant of \cite[Theorem 2]{ekedahl}.

\begin{proof} First assume that $f$ is a finite morphism. The composition of the adjunction and trace maps
 \[ \Ql \to Rf_* f^* \Ql  = Rf_! f^* \Ql  \to \Ql\]
 is multiplication by the degree of $f$ \cite[XVI, Proposition 6.2.5]{sga4-3}, hence nonzero, so $\Ql$ is a direct summand of $R f_! f^* \Ql = R f_! \Ql$. Therefore $H^i_c(X_{\Fq}, \Ql)$ is a direct summand of $H^i_c(X_{\Fq}, Rf_!\Ql)$, which equals $H^i_c(Y_{\Fq}, \Ql)$ by the Leray spectral sequence \cite[XVI, Theorem 5.1.8(a)]{sga4-3}, hence $H^i_{\oo}(X)$ is a direct summand of $H^i_{\oo}(Y)$.

Next assume that $f$ is a dominant morphism and not just a rational map. Because every dominant map between varieties of the same dimension is generically finite \cite[Tag 02NX]{stacks-project}, we may pass to an open subset of $X$ where the map is finite, and also pass to the inverse image of that subset in $Y$. Using Proposition \ref{birational}, this does not affect $H^i_{\oo}$. Thus we may reduce to the previous case.

For general $f$, by passing to an open subset of $X$ and $Y$ and using Proposition \ref{birational}, we may assume that $f$ is a morphism and handle it using the previous case. 
\hfill $\Box$
\end{proof}

\begin{prop}\label{geometric} Let $X$ be a variety over $\mathbb F_q$. Then \[H^i_{\oo} (X _{\mathbb F_{q^n}})= H^i_{\oo}(X).\] \end{prop}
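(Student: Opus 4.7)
The plan is to observe that $H^i_{\oo}(X)$ and $H^i_{\oo}(X_{\mathbb F_{q^n}})$ are defined as quotients of the same $\Ql$-vector space $V := H^i_c(X_{\Fq}, \Ql)$, since $X$ and $X_{\mathbb F_{q^n}}$ have the same base change to $\Fq$ and the same dimension $d$. What differs is that the former quotients using the $\Frob_q$-action and the bound $q^{d-1}$, whereas the latter uses the $\Frob_{q^n} = \Frob_q^n$-action and the bound $q^{n(d-1)}$. So it suffices to show that the two subspaces being quotiented out coincide as subspaces of $V$.

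Next, I would decompose $V \otimes_{\Ql} \overline{\Ql} = \bigoplus_{\alpha} V_\alpha$ into generalized eigenspaces of $\Frob_q$. Since the projections onto the $V_\alpha$ are polynomials in $\Frob_q$, any $\Frob_q$-stable subspace $W$ splits as $\bigoplus_\alpha (W \cap V_\alpha)$, and hence the maximal $\Frob_q$-stable subspace on which every eigenvalue of $\Frob_q$ divides $q^{d-1}$ is exactly $\bigoplus_{\alpha \mid q^{d-1}} V_\alpha$. Applying the same reasoning to $\Frob_q^n$, and using that its generalized eigenspace for eigenvalue $\beta$ is $\bigoplus_{\alpha^n = \beta} V_\alpha$, the subspace quotiented out in $H^i_{\oo}(X_{\mathbb F_{q^n}})$ is $\bigoplus_{\alpha \,:\, \alpha^n \mid q^{n(d-1)}} V_\alpha$. (Both subspaces are Galois-stable as sets of $\alpha$'s, so they are indeed defined over $\Ql$.)

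Finally, I would verify the arithmetic claim that $\alpha \mid q^{d-1}$ if and only if $\alpha^n \mid q^{n(d-1)}$ in the ring of algebraic integers. The forward direction follows by raising to the $n$-th power. For the converse, if $(q^{d-1}/\alpha)^n$ is an algebraic integer satisfying some monic polynomial $f(y) \in \mathbb Z[y]$, then $q^{d-1}/\alpha$ satisfies the monic polynomial $f(x^n) \in \mathbb Z[x]$ and is therefore itself an algebraic integer. Consequently the two subspaces coincide, giving the claimed equality. The argument is essentially formal, and I do not anticipate any major obstacle; the only nontrivial input is the elementary observation about $n$-th roots of algebraic integers.
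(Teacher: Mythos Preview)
Your argument is correct and follows essentially the same route as the paper's proof: identify the underlying vector spaces, note that $\Frob_{q^n}=\Frob_q^{\,n}$ so the relevant eigenvalues are $n$th powers, and reduce to the arithmetic fact that $\alpha\mid q^{d-1}$ iff $\alpha^n\mid q^{n(d-1)}$ because $n$th roots of algebraic integers are algebraic integers. The paper states this more tersely, but your added care with the generalized eigenspace decomposition and descent to $\Ql$ is a harmless elaboration of the same idea.
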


\begin{proof} $\Frob_{q^n}$ acts on $H^i_c(X_{\overline{\mathbb F}_{q^n}},\Ql)= H^i_c(X_{\Fq},\Ql)$ by the $n$th power of $\Frob_q$, so its eigenvalues are the $n$th powers of the eigenvalues of $\Frob_q$.  The eigenvalue $\lambda^n$ divides $(q^n)^{d-1}$ in the ring of algebraic integers if and only if $\lambda$ divides $q^d$ in the ring of algebraic integers, because the $n$th root of an algebraic integer is an algebraic integer. So these two vector spaces are manifestly isomorphic. 
\hfill $\Box$
\end{proof}

Using these, we can prove that $H^i_{\oo}(X)$ is an obstruction to unirationality:

\begin{cor}\label{unirational} Let $X$ be a variety over $\mathbb F_q$ of dimension $d$ that is unirational over $\overline{\mathbb F}_q$. Then $H^i_{\oo}(X) =0 $ for $i<2d$. \end{cor}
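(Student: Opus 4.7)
The plan is to reduce the problem to computing $H^i_{\oo}$ of projective space and then feeding this into the two propositions just proved.

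First I would handle the field of definition issue. Unirationality over $\overline{\mathbb F}_q$ gives a dominant rational map $\mathbb P^n_{\overline{\mathbb F}_q} \dashrightarrow X_{\overline{\mathbb F}_q}$ for some $n$; this map and the source are defined over some finite extension $\mathbb F_{q^m}$. By Proposition \ref{geometric}, $H^i_{\oo}(X_{\mathbb F_{q^m}}) = H^i_{\oo}(X)$, so I may replace $\mathbb F_q$ by $\mathbb F_{q^m}$ and assume the map is defined over the ground field.

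Next I would arrange that the source has the same dimension as $X$. Restricting the dominant rational map $\mathbb P^n \dashrightarrow X$ to a generic linear subspace $\mathbb P^d \subset \mathbb P^n$ gives a dominant rational map $\mathbb P^d \dashrightarrow X$, which is automatically generically finite since the two varieties have the same dimension. Proposition \ref{covering} then exhibits $H^i_{\oo}(X)$ as a direct summand of $H^i_{\oo}(\mathbb P^d)$, so it suffices to prove the vanishing for projective space.

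For this final step, $\mathbb P^d$ is proper, so $H^i_c(\mathbb P^d_{\overline{\mathbb F}_q},\Ql) = H^i(\mathbb P^d_{\overline{\mathbb F}_q},\Ql)$, which vanishes for $i$ odd and is one-dimensional with $\Frob_q$-eigenvalue $q^{i/2}$ for $i$ even, $0 \leq i \leq 2d$. For $i < 2d$ we have $i/2 \leq d-1$, so $q^{i/2}$ is a rational integer dividing $q^{d-1}$, hence divides $q^{d-1}$ in the ring of algebraic integers. Therefore the whole of $H^i_c(\mathbb P^d_{\overline{\mathbb F}_q},\Ql)$ is killed in the quotient defining $H^i_{\oo}$, giving $H^i_{\oo}(\mathbb P^d) = 0$ for $i < 2d$, and the corollary follows.

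The argument is essentially a straightforward assembly of the three preceding results; there is no real obstacle. The only mildly non-routine point is the reduction from an arbitrary $\mathbb P^n$ to $\mathbb P^d$ (required because Proposition \ref{covering} is stated for varieties of equal dimension), handled by the generic linear section above.
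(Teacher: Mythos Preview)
Your proof is correct and follows essentially the same route as the paper's: descend to a finite extension via Proposition \ref{geometric}, cut the source down to $\mathbb P^d$ by a general linear section, apply Proposition \ref{covering}, and compute $H^i_{\oo}(\mathbb P^d)$ directly from the known Frobenius eigenvalues on the cohomology of projective space. The only cosmetic difference is the order of the first two steps---the paper slices over $\overline{\mathbb F}_q$ first and then descends, which avoids the (easily repaired, by a further extension) worry that a \emph{generic} linear $\mathbb P^d$ might not be defined over the finite field you have already descended to.
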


This is not exactly stated in \cite{ekedahl} but is very similar to \cite[Corollary 3(iii)]{ekedahl}.

\begin{proof}  Because $X$ is unirational over $\overline{\mathbb F}_q$, it is unirational over $\mathbb F_{q^n}$ for some $n$.

When $i=2k$ for $0\leq k \leq d$, $H^i_c(\mathbb P^d_{\Fq}, \Ql)$ is one-dimensional and Frobenius acts on it with eigenvalue $q^k$, and $H^i_c(\mathbb P^d_{\Fq}, \Ql)$ vanishes for all values of $i$ not of this form (see e.g. \cite[Example 16.3]{milneLEC}). For $k<d$ this eigenvalue divides $q^{d-1}$. Hence $H^i_{\oo} (\mathbb P^d)$ is $0$ for $i<2d$. Thus by Proposition \ref{covering} and Proposition \ref{geometric}, the same is true for $H^i_{\oo}(X_{\mathbb F_q^n})$ and $H^i_{\oo}(X)$.
\hfill $\Box$
\end{proof}

\begin{remark}
{\rm For $X$ a smooth projective simply-connected surface over $\mathbb F_q$ satisfying the Tate conjecture, the converse to Corollary \ref{unirational} would follow from Shioda's conjecture \cite[Conjecture on p. 167]{shioda77}. Indeed, recall that all eigenvalues of $\Frob_q$ on $H^2(X_{\Fq},\Ql)$ are algebraic integers of absolute value $q$. Let $\lambda$ be an eigenvalue of $\Frob_q$ on $H^2(X_{\Fq},\Ql)$. If $H^2_{\oo}(X)=0$ then $\lambda$ divides $q$ so $q/\lambda$ is an algebraic integer. Because all Galois conjugates $\sigma(\lambda)$ of $\lambda$ are also eigenvalues, they also have absolute value $|q|$, so $|\sigma{\alpha}|=|q/\sigma(\lambda)|=q/q=1$ for all $\sigma$. Because $\alpha$ is an algebraic integer all whose conjugates have norm $1$, it is a root of unity. Hence every eigenvalue of $\Frob_q$ is $q$ times a root of unity. Let $n$ be the lcm of the orders of these roots of unity. Then the eigenvalues of  $\Frob_{q^n}$ on $H^2(X_{\Fq},\Ql)$ are all equal to $q^n$. Under the Tate conjecture, that implies the cohomology group $H^2(X_{\Fq},\Ql)$ is generated by classes of cycles defined over $\mathbb F_{q^n}$, so $X$ is supersingular in Shioda's sense. Under Shioda's conjecture, because $X$ is supersingular and simply-connected, it is unirational \cite[Conjecture on p. 167]{shioda77}.}
\end{remark}

We can even show $H^d_{\oo}(X)$ is an obstruction to uniruledness:

\begin{cor}\label{uniruled} Let $X$ be a variety over $\mathbb F_q$ of dimension $d$ that is uniruled over $\overline{\mathbb F}_q$. Then $H^d_{\oo}(X) =0 $. \end{cor}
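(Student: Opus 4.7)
The plan is to combine the structural description of uniruledness with the K\"unneth formula. Since $X$ is uniruled over $\Fq$, there is a variety $Y$ of dimension $d-1$ and a dominant rational map $Y \times \mathbb{P}^1 \dashrightarrow X_{\Fq}$. By spreading out, $Y$ and the map descend to some finite extension $\mathbb{F}_{q^n}$. Using Proposition \ref{geometric} to replace $X$ with $X_{\mathbb{F}_{q^n}}$ and Proposition \ref{covering} applied to this dominant rational map of $d$-dimensional varieties, I reduce the corollary to proving that $H^d_{\oo}(Y \times \mathbb{P}^1) = 0$.

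Next I would apply the K\"unneth formula. Since the only nonzero compactly-supported cohomology groups of $\mathbb{P}^1$ are $H^0_c(\mathbb{P}^1_{\Fq}, \Ql) = \Ql$ (trivial Frobenius) and $H^2_c(\mathbb{P}^1_{\Fq}, \Ql) = \Ql(-1)$ (Frobenius eigenvalue $q$), K\"unneth yields
\[
H^d_c((Y \times \mathbb{P}^1)_{\Fq}, \Ql) \cong H^d_c(Y_{\Fq}, \Ql) \oplus H^{d-2}_c(Y_{\Fq}, \Ql)(-1).
\]

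The remaining task is to check that every Frobenius eigenvalue on each summand divides $q^{d-1}$ in the ring of algebraic integers. For the first summand this is immediate from the SGA 7 bound cited in the proof of Proposition \ref{birational}, since $\dim Y = d-1$. For the second summand, each eigenvalue has the form $q\mu$ with $\mu$ an eigenvalue of Frobenius on $H^{d-2}_c(Y_{\Fq}, \Ql)$; the desired divisibility $q\mu \mid q^{d-1}$ amounts to $\mu \mid q^{d-2}$.

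The main obstacle is this last divisibility: the dimension bound alone gives only the weaker conclusion $\mu \mid q^{d-1}$, one power of $q$ too weak. I would close the gap by invoking Deligne's Weil II, which asserts that every Frobenius eigenvalue on $H^i_c$ of any variety is a pure Weil number of some weight $w \leq i$. For such a $\mu$, complex conjugation sends it to $q^w/\mu$, which is therefore an algebraic integer, giving $\mu \mid q^w$, and in particular $\mu \mid q^{d-2}$ when $i = d-2$. An equivalent route that avoids Weil II would be to first replace $Y$ by a smooth projective alteration, whereupon Deligne's purity (Weil I) furnishes the same weight bound directly.
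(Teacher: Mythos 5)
Your argument follows the same route as the paper's proof: reduce via Propositions \ref{geometric} and \ref{covering} to showing $H^d_{\oo}(Y\times\mathbb P^1)=0$ for some $Y$ of dimension $d-1$ (the paper gets $\dim Y = d-1$ by slicing $Y$ with general hyperplanes rather than building it into the definition, but this is immaterial), decompose $H^d_c$ by K\"unneth into $H^d_c(Y_{\Fq},\Ql)\oplus H^{d-2}_c(Y_{\Fq},\Ql)(-1)$, and bound the Frobenius eigenvalues on each summand. The single point of divergence is the step you identify as the main obstacle. The reference the paper uses for these bounds, \citep[XXI Corollary 5.5.3(iii)]{sga7-ii}, is sensitive to the cohomological degree as well as to the dimension of the variety: it yields directly that the eigenvalues on $H^{d-2}_c(Y_{\Fq},\Ql)$ divide $q^{d-2}$, not merely $q^{d-1}$, so no further input is needed. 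Your workaround is nonetheless valid: each eigenvalue $\mu$ of Frobenius on $H^{d-2}_c$ is a nonzero algebraic integer which, by Weil II, is pure of some weight $w\leq d-2$; its complex conjugate $q^{w}/\mu$ is again an algebraic integer, whence $\mu\mid q^{w}\mid q^{d-2}$, and so $q\mu\mid q^{d-1}$ as required. The trade-off is that you invoke Weil II (or de Jong's alterations plus Weil I) for a divisibility that the elementary d\'evissage arguments of SGA 7 already supply; the paper's version keeps the corollary independent of purity results, but both proofs are correct and structurally identical.
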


\begin{proof}  Because $X$ is uniruled over $\overline{\mathbb F}_q$, it is uniruled over $\mathbb F_{q^n}$ for some $n$.

Because $X_{\mathbb F_{q^n}}$ is uniruled, it is dominated by $Y \times \mathbb P^1$ for some $Y$. By repeatedly taking a general hyperplane slice of $Y$, we may assume that $Y$ has dimension $d-1$.   By the K\"{u}nneth formula \cite[XVII Theorem 5.4.3]{sga4-3} \[H^d_c(Y_{\Fq} \times \mathbb P^1_{\Fq}) = \sum_{i=0}^d H^i_c(Y_{\Fq},\Ql) \otimes H^{d-i}_c(\mathbb P^1_{\Fq},\Ql).\] Because $H^0_c(\mathbb P^1_{\Fq},\Ql)= \Ql$, $H^2_c(\mathbb P^1_{\Fq},\Ql) =\Ql(-1)$, and all other cohomology groups of $\mathbb P^1$ vanish  (see e.g. \cite[Example 16.3]{milneLEC}) we have \[H^d_c(Y_{\Fq} \times \mathbb P^1_{\Fq}) =H^d_c(Y_{\Fq},\Ql) + H^{d-2}_c(Y_{\Fq},\Ql(-1)).\] By \cite[XXI Corollary 5.5.3(iii)]{sga7-ii}, the eigenvalues of Frobenius on $H^d_c(Y_{\Fq},\Ql)$ divide $q^{d-1}$ in the ring of algebraic integers and the eigenvalues of Frobenius on $H^{d-2}_c(Y_{\Fq},\Ql)$ divide $q^{d-2}$ in the ring of algebraic integers. Hence the eigenvalues of Frobenius on $H^{d-2}_c(Y_{\Fq},\Ql(-1))$, which are $q$ times the eigenvalues of Frobenius on $H^{d-2}_c(Y_{\Fq},\Ql)$, also divide $q^{d-1}$. Thus all eigenvalues of Frobenius on $H^d_c(Y_{\Fq} \times \mathbb P^1_{\Fq})$ divide $q^{d-1}$ in the ring of algebraic integers and so $H^d_{\oo}(Y \times \mathbb P^1,\Ql)=0$.

Thus by Proposition \ref{covering} and Proposition \ref{geometric}, the same is true for $H^d_{\oo}(X_{\mathbb F_q^n})$ and $H^d_{\oo}(X)$.
\hfill $\Box$
\end{proof}

For smooth varieties, we can express $H^i_{\oo}(X)$ in a different way using Poincar\'{e} duality. This connects it to an argument of \cite{esnault03} and implies that $H^i_{\oo}(X)$ is an obstruction, not just to unirationality, but to some weaker conditions, the weakest of which is that $X$ admits a decomposition of the diagonal. 

\begin{prop}\label{smooth} Let \,$X$ \,be \,a \,smooth \,variety \,of \,dimension \,$d$. \,Then \,$H^i_{\oo}(X)$ \,is \,dual \,to \,the \,quotient \,of \,$H^{2d-i}(X_{\Fq}, \Ql)$ by the maximal subspace on which $\Frob_q$ acts by eigenvalues not divisible by $q$. \end{prop}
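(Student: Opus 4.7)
The plan is to deduce this from Poincaré duality for smooth varieties, together with the observation that the divisibility condition defining $H^i_{\oo}(X)$ corresponds, under the duality, to the divisibility condition in the statement.

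First, I would invoke Poincaré duality, which for a smooth variety of dimension $d$ gives a perfect Frobenius-equivariant pairing
\[ H^i_c(X_{\Fq}, \Ql) \times H^{2d-i}(X_{\Fq}, \Ql) \to H^{2d}_c(X_{\Fq}, \Ql) \cong \Ql(-d). \]
Decomposing each side into generalized $\Frob_q$-eigenspaces $V_\lambda$ and $W_\mu$ over $\overline{\Ql}$, the fact that $\Frob_q$ acts on $\Ql(-d)$ by $q^d$ forces the restricted pairing $V_\lambda \times W_\mu \to \Ql(-d)$ to vanish unless $\lambda\mu = q^d$, and perfectness of the total pairing then implies that $V_\lambda$ and $W_{q^d/\lambda}$ pair perfectly (and in particular have the same dimension).

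The key numerical observation is that for an algebraic integer $\lambda$, the divisibility $\lambda \mid q^{d-1}$ holds if and only if $q \mid q^d/\lambda$, since both conditions are equivalent to $q^{d-1}/\lambda$ being an algebraic integer. Let $V$ be the maximal $\Frob_q$-stable subspace of $H^i_c(X_{\Fq}, \Ql)$ on which all eigenvalues divide $q^{d-1}$, so that $H^i_{\oo}(X) = H^i_c/V$; let $W'$ be the maximal $\Frob_q$-stable subspace of $H^{2d-i}(X_{\Fq}, \Ql)$ on which all eigenvalues are not divisible by $q$. A standard linear-algebra argument shows that $V = \bigoplus_{\lambda \mid q^{d-1}} V_\lambda$ and $W' = \bigoplus_{q \nmid \mu} W_\mu$, and the eigenvalue bijection $\lambda \leftrightarrow q^d/\lambda$ from the previous paragraph then identifies the annihilator $V^\perp$ under Poincaré duality with exactly $W'$.

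Finally, since $V^\perp = W'$, the Poincaré pairing descends to a perfect Frobenius-equivariant pairing
\[ (H^i_c(X_{\Fq}, \Ql)/V) \times (H^{2d-i}(X_{\Fq}, \Ql)/W') \to \Ql(-d), \]
identifying $H^i_{\oo}(X)$ with the dual of $H^{2d-i}/W'$, which is the statement of the proposition. I expect the main technical step to be the identification $V^\perp = W'$: once the generalized eigenspace decomposition is in hand it reduces to the divisibility equivalence $\lambda \mid q^{d-1} \iff q \mid q^d/\lambda$, but some care is required to verify that the ``maximal subspaces'' in the statement really do coincide with the direct sums of the corresponding generalized eigenspaces.
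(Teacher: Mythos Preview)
Your approach is essentially identical to the paper's: invoke Poincar\'e duality, decompose into generalized eigenspaces, and use the equivalence $\lambda \mid q^{d-1} \Longleftrightarrow q \mid q^d/\lambda$. However, your final linear-algebra step is wrong. From $V^\perp = W'$ (with $V \subset H^i_c$ and $W' \subset H^{2d-i}$) one obtains a perfect pairing
\[
(H^i_c/V) \times V^\perp \;=\; (H^i_c/V) \times W' \;\longrightarrow\; \Ql(-d),
\]
not a pairing $(H^i_c/V) \times (H^{2d-i}/W')$ as you claim. In general, for a perfect pairing $A \times B \to k$ and a subspace $V \subset A$, the quotient $A/V$ pairs perfectly with $V^\perp$, while $B/V^\perp$ pairs perfectly with $V$; you have confused these two. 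So the correct conclusion is that $H^i_{\oo}(X)$ is dual to $W'$ itself, i.e.\ to the sum of generalized eigenspaces with eigenvalue \emph{not} divisible by $q$, which is the same as the quotient of $H^{2d-i}$ by the eigenspaces with eigenvalue \emph{divisible} by $q$.

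This is in fact exactly what the paper's own proof concludes (``the quotient by the generalized eigenspaces of eigenvalues divisible by $q$''), so the word ``not'' in the proposition's statement appears to be a typo. Your error was to follow the misstated proposition literally and then force the final step to match it; had you carried the linear algebra through correctly you would have noticed the discrepancy.
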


\begin{proof} By Poincar\'{e} duality \cite[XVIII, Theorem 3.2.5]{sga4-3}, $H^i_c(X_{\Fq},\Ql)$ and $H^{2d-i}(X_{\Fq},\Ql)$ are dual, with $\Frob_q$ acting on the pairing by multiplication by $q^d$. 

 By Jordan normal form, the maximal  $\Frob_q$-stable subspace of $H^i_c(X_{\Fq},\Ql)$ with all eigenvalues of Frobenius dividing $q^{d-1}$ is the complement of the maximal $\Frob_q$-stable subspace of $H^i_c(X_{\Fq},\Ql)$ without any eigenvalues of Frobenius dividing $q^{n-1}$. Thus $H^i_{\oo}(X)$ is isomorphic to the maximal $\Frob_q$-stable subspace of $H^i_c(X_{\Fq},\Ql)$  with eigenvalues $\lambda$ algebraic integers not dividing $q^{d-1}$.

Hence $H^i_{\oo}(X)$ is dual to the maximal $\Frob_q$-stable quotient space of $H^{2d-i}(X_{\Fq},\Ql)$ with eigenvalues $q^d/\lambda$ where $\lambda$ does not divide $q^{d-1}$, i.e. eigenvalues that are not multiples of $q$.

That is the same as the quotient by the maximal subspace with eigenvalues that are multiples of $q$.
\hfill $\Box$ \end{proof}

\begin{remark} 
{\rm Katz's ``Newton above Hodge" conjecture, proved by Mazur \cite[Theorem 1]{mazurnewtonhodge} and Ogus \cite[Theorem 4.5]{ogus}, and the comparison between \'{e}tale and crystalline cohomology together imply that, if $X$ is a smooth projective variety over $\mathbb F_q$ and $H^i(X,\mathcal O_X)=0$, then all eigenvalues of $\Frob_q$ on $H^i(X, \mathbb Q_\ell)$ are divisible by $q$. Thus the invariant $H^{2d-i}_{\oo}(X)$ is closely connected to $H^i(X,\mathcal O_X)$.}
\end{remark}

\begin{prop}\label{esnault} Let $X$ be a smooth projective variety over $\mathbb F_q$. Assume one of the following conditions holds:
\begin{itemize}
\item $Ch^0(X_{\overline{k(X)}})= \mathbb Z$.
\item $X$ admits a decomposition of the diagonal.
\end{itemize}
Then $ H^i_{\oo}(X) =0 $ for $i<2d$ .\end{prop}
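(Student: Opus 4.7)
I would follow Esnault's strategy of reducing both hypotheses to a decomposition of the diagonal in the Chow group of $X \times X$. The second hypothesis says directly that, for some positive integer $N$, zero-cycle $z$ on $X$, and proper closed $D \subsetneq X$,
\[ N\, [\Delta_X] \;=\; [X \times z] \;+\; [Z] \qquad \text{in }\mathrm{CH}^d(X\times X), \]
with $Z$ supported on $X \times D$ (interchanging the factors of $X \times X$ if necessary to put the ``lower-dimensional'' factor on the right). The first hypothesis gives the same conclusion via Bloch--Srinivas. After passing to a finite extension of $\mathbb F_q$ (harmless by Proposition \ref{geometric}), I would use de~Jong's alterations to replace $D$ by a smooth projective $\tilde D$ of dimension at most $d-1$ admitting a generically finite surjection to $D$.

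Letting the two sides act on $H^i(X_{\overline{\mathbb F}_q}, \Ql)$ as correspondences, $[\Delta_X]$ acts as the identity. A standard projection-formula computation shows that $(X \times z)_*$ vanishes on $H^i$ for $0 < i < 2d$, since it factors through the cohomology of a point concentrated in degree zero. Hence $N \cdot \mathrm{id} = Z_*$ on $H^i$ in this range, and since $N$ is invertible in $\Ql$ the image of $Z_*$ is all of $H^i(X_{\overline{\mathbb F}_q}, \Ql)$. Using the support condition $Z \subset X \times D$ and lifting to a cycle $\tilde Z \subset X \times \tilde D$, I would factor $Z_*$ as
\[ H^i(X_{\overline{\mathbb F}_q}, \Ql) \xrightarrow{\;(\tilde Z)_*\;} H^{i-2}\bigl(\tilde D_{\overline{\mathbb F}_q}, \Ql(-1)\bigr) \longrightarrow H^i(X_{\overline{\mathbb F}_q}, \Ql), \]
where the first arrow is the correspondence induced by $\tilde Z$ (the Tate twist $(-1)$ and the degree shift $-2$ come from the dimension jump between $X$ and $\tilde D$) and the second is pushforward along the proper map $\tilde D \to X$.

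The Frobenius eigenvalues on the middle group equal $q$ times the eigenvalues on $H^{i-2}(\tilde D_{\overline{\mathbb F}_q}, \Ql)$, which are algebraic integers; hence they are divisible by $q$ in the ring of algebraic integers. It follows that every Frobenius eigenvalue on $H^i(X_{\overline{\mathbb F}_q}, \Ql) = \operatorname{image}(Z_*)$ is divisible by $q$ for $0 < i < 2d$. Applying Proposition \ref{smooth}, $H^j_{\oo}(X)$ is dual to the subspace of $H^{2d-j}(X_{\overline{\mathbb F}_q}, \Ql)$ on which Frobenius eigenvalues are not divisible by $q$, and this vanishes for $0 < j < 2d$. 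The case $j = 0$ is immediate from $H^0_c(X_{\overline{\mathbb F}_q}, \Ql) = \Ql$ with Frobenius eigenvalue $1$, which divides $q^{d-1}$ for $d \geq 1$. The main technical hurdle is verifying the middle factorization through $H^{i-2}(\tilde D, \Ql(-1))$, which requires careful bookkeeping using the projection formula together with the Gysin map for $\tilde D \to X$ and depends on the existence of alterations in positive characteristic.
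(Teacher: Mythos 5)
Your proposal is correct and follows the same route as the paper: reduce the first hypothesis to a decomposition of the diagonal via Bloch--Srinivas, deduce that all Frobenius eigenvalues on $H^i(X_{\overline{\mathbb F}_q},\Ql)$ for $0<i<2d$ are divisible by $q$, and conclude by Poincar\'e duality as in Proposition \ref{smooth}. The only difference is that the paper simply cites the $\ell$-adic version of Esnault's Theorem 1.1 for the middle step, whereas you sketch its proof (correspondences, alterations, and the factorization through $H^{i-2}(\tilde D,\Ql(-1))$); your sketch is the standard argument and your degree/twist bookkeeping is right, with the lifting of $Z$ to $X\times\tilde D$ being the only point that genuinely requires the care you flag.
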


\begin{proof} Assume that $X$ admits a decomposition of the diagonal. By \cite[Theorem 1.1]{esnault03}, this implies that the $p$-adic slopes of the eigenvalues of $\Frob_q$ on $H^i(X_{\Fq}, \Ql)$ are at least $1$ for $i>0$. Because every Galois conjugate of an eigenvalue of $\Frob_q$ is also an eigenvalue of $\Frob_q$, and because an algebraic integer all whose Galois conjugates have $p$-adic valuation at least the $p$-adic valuation of $q$ is divisible by $q$, this implies that the eigenvalues must be divisible by $q$.  Hence $H^{2d-i}_{\oo}(X)=0$ for $i>0$ by Proposition \ref{smooth}.

The first condition implies the second condition by \cite[Appendix to Lecture 1, Exercise 1A.4]{bloch80}. 
\hfill $\Box$
\end{proof}

\section{Application to the Moduli Space of Elliptic Curves}

We will apply $H^i_{\oo}$ to the Deligne-Mumford moduli space $\overline{M}_{1,n}$ of genus $1$ curves with $n$ marked points, a singular projective variety. (We will avoid the use of Deligne-Mumford stacks as they are unnecessary for this problem).

It turns out that $H^i_{\oo}(\overline{M}_{1,n})$ is controlled by modular forms:

We say an algebraic integer is prime to $p$ if some coefficient of its characteristic polynomial other than the first is nonzero modulo $p$. We say a Hecke eigenform is ordinary at $p$ if its $p$th Hecke eigenvalue is prime to p.

\begin{prop}\label{deligne} Let $k$ be a natural number with $k\leq n+1$. If the space $S_k(\Gamma(1))$ of cusp forms of weight $k$ and level $1$ contains an eigenform that is ordinary at $p$, then
\[H^{2n+1-k}_{\oo} ( \overline{M}_{1,n, \mathbb F_p} ) \neq 0.\]\end{prop}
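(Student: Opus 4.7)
The plan is to use Poincar\'{e} duality for the smooth variety $M_{1,n}$ to recast the non-vanishing of $H^{2n+1-k}_{\oo}$ as the existence of a Frobenius eigenvalue on $H^{k-1}(M_{1,n,\overline{\mathbb F}_p},\Ql)$ that is not divisible by $p$ in $\overline{\mathbb Z}$, and then to locate such an eigenvalue using Deligne's realization of the Galois representation $\rho_f$ attached to $f$ in the \'{e}tale cohomology of a Kuga--Sato variety birational to (a piece of) $\overline{M}_{1,n}$. The ordinarity of $f$ is exactly what supplies the required unit Frobenius eigenvalue.

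First I would use Proposition \ref{birational} to replace $\overline{M}_{1,n}$ by a smooth dense open $U$ of $M_{1,n}$ of dimension $n$, and then apply Proposition \ref{smooth} to reduce the problem to exhibiting an eigenvalue of $\Frob_p$ on $H^{k-1}(U_{\overline{\mathbb F}_p},\Ql)$ that is not divisible by $p$ in $\overline{\mathbb Z}$. The hypothesis $k \leq n+1$ gives $k - 2 \leq n - 1$, so $U$ admits a dominant rational map onto the $(k-2)$-fold Kuga--Sato variety $W_{k-2}$ of dimension $k - 1$: compose the forgetful map $M_{1,n} \to M_{1,k-1}$ with the birational equivalence between $M_{1,k-1}$ and a desingularized $(k-2)$-fold self-product of the universal elliptic curve over $\overline{M}_{1,1}$. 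By the classical Eichler--Shimura--Deligne theorem, $\rho_f$ is realized as a Galois-stable subspace of the middle cohomology $H^{k-1}(W_{k-2,\overline{\mathbb F}_p},\Ql)$, and a Leray/K\"{u}nneth argument---using that the generic fiber of the rational map $U \dashrightarrow W_{k-2}$ is an open subset of a product of elliptic curves, with $R^0$ of the projection equal to $\Ql$---transports a copy of $\rho_f$ into $H^{k-1}(U,\Ql)$.

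The characteristic polynomial of $\Frob_p$ on $\rho_f$ is $X^2 - a_p(f) X + p^{k-1}$. Ordinarity of $f$ means $a_p(f)$ is prime to $p$, equivalently $v_{\mathfrak p}(a_p(f)) = 0$ for some prime $\mathfrak p$ of the Hecke field $K_f$ lying above $p$. Hensel's lemma in the completion at $\mathfrak p$ then factors the polynomial as $(X-\alpha)(X-\beta)$ with $v_{\mathfrak p}(\alpha) = 0$ and $v_{\mathfrak p}(\beta) = k-1$. In particular $v_{\mathfrak p}(\alpha/p) = -v_{\mathfrak p}(p) < 0$, so $\alpha/p \notin \overline{\mathbb Z}$, meaning $\alpha$ is not divisible by $p$ in $\overline{\mathbb Z}$. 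This $\alpha$ is the required eigenvalue, and the proposition follows from the reduction above.

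The main technical obstacle is the last step of locating $\rho_f$: one must verify that the copy of $\rho_f$ produced by the Leray argument actually survives in $H^{k-1}(U,\Ql)$ and is not annihilated by a spectral sequence differential or lost when passing between the various birational models. Deligne's purity theorem identifies $\rho_f$ as pure of weight $k-1$, so any cancelling eigenvalue would have to be pure of the same weight; ruling this out calls for an inspection of the competing Leray/K\"{u}nneth contributions, which come from lower-dimensional Kuga--Sato varieties with strictly smaller cohomological weight, or from Tate twists that cannot produce an eigenvalue matching the unit root $\alpha$ of an ordinary weight-$k$ cusp form.
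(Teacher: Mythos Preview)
Your outline follows the same arc as the paper's proof: pass to a smooth birational model, use Poincar\'e duality (Proposition~\ref{smooth}) to reduce to finding a $\Frob_p$-eigenvalue on $H^{k-1}$ not divisible by $p$, and locate that eigenvalue via Deligne's realization of the Galois representation attached to $f$. Your Newton-polygon analysis of $X^2 - a_p(f) X + p^{k-1}$ is correct and is the direct counterpart of the paper's contrapositive use of Deligne's identity $T_p = F + V$ on the parabolic cohomology.

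The divergence---and the place where your acknowledged gap sits---is the transport step. You fiber a smooth open $U \subset M_{1,n}$ over the $(k-1)$-dimensional Kuga--Sato variety via a non-proper rational map and must then argue that $\rho_f$ survives the Leray spectral sequence; your proposed weight argument is delicate because neither source nor target is proper, so the competing $E_2$-terms are not pure and the unit root $\alpha$ is not distinguished by its Archimedean weight alone. The paper sidesteps this entirely by working with the level-$m$ model $E_m^{n-1}$ (the $(n-1)$-fold self-product of the universal elliptic curve over the modular curve $M_m$), whose $GL_2(\mathbb Z/m)$-quotient is birational to $\overline{M}_{1,n}$ (Lemma~\ref{obvious} handles the passage to invariants), and by fibering over the one-dimensional base $M_m$ rather than over $W_{k-2}$. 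The map $f_m^{n-1}: E_m^{n-1} \to M_m$ is smooth and \emph{proper}, so Deligne's degeneration theorem for Leray applies and exhibits $H^1(M_m, R^{k-2}(f_m^{n-1})_*\Ql)$ as an honest summand of $H^{k-1}(E_m^{n-1})$; K\"unneth on the fibers (this is where $k-2 \le n-1$ is used) then isolates $H^1(M_m, \Sym^{k-2} R^1 f_{m*}\Ql)$ and its parabolic quotient, with no spectral-sequence cancellation to worry about. Your map to $W_{k-2}$ can be repaired by the same device: replace it with the smooth proper projection $E_m^{n-1} \to E_m^{k-2}$ onto the first $k-2$ fiber factors, to which Deligne's degeneration again applies.
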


First we sketch the proof. Deligne, in \cite{deligne69}, constructed two-dimensional Galois representations associated to modular forms. These representations are defined as subspaces of a certain sheaf cohomology group on a modular curve. The Frobenius eigenvalues of these representations can be related to the Hecke eigenvalues of the modular forms. Using the Leray spectral sequence, we may write the \'{e}tale cohomology of $\overline{M}_{1,n}$ in terms of sheaf cohomology on a modular curve, and by comparing the relevant sheaves we can show that the two-dimensional Galois representations defined by Deligne occur as subquotients of the cohomology of $\overline{M}_{1,n}$. To show that the Frobenius eigenvalues on these Galois representations are prime to $p$, and thus to show via Proposition \ref{smooth} that $H^{i}_{\oo} ( \overline{M}_{1,n, \mathbb F_p} )  \neq  0$, it suffices to show that the Hecke eigenvalues are prime to~$p$.

We need the following lemma, which is surely well-known, though we did not find a suitable reference in the literature:

\begin{lem}\label{obvious} Let $X$ be a variety and $G$ a finite group acting on $X$. Let $X/G$ be the quotient space. Then for all $i$, $H^i_c(X/G, \Ql)= H^i_c(X,\Ql)^G$. \end{lem}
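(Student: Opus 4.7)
The plan is to compare compactly supported cohomology on $X$ and on $X/G$ via the quotient map $\pi \colon X \to X/G$. Assuming $X$ is quasi-projective (which holds in all applications we need), the quotient exists as a scheme and $\pi$ is a finite surjective morphism. In particular $\pi$ is proper and, since finite morphisms have vanishing higher étale direct images, $R\pi_! \Ql = R\pi_* \Ql = \pi_* \Ql$ concentrated in degree zero. The Leray spectral sequence then provides a $G$-equivariant isomorphism
\[
H^i_c(X, \Ql) \;\cong\; H^i_c(X/G, \pi_* \Ql).
\]

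Next, I would identify the sheaf of $G$-invariants $(\pi_* \Ql)^G$ on $X/G$. For a geometric point $\bar y \to X/G$, the stalk $(\pi_* \Ql)_{\bar y}$ is the $\Ql$-vector space of functions on the finite set $\pi^{-1}(\bar y)$ of geometric points in the fiber. For the geometric quotient of a quasi-projective variety by a finite group, the geometric fibers of $\pi$ coincide with the $G$-orbits in $X$, so $G$ acts transitively on $\pi^{-1}(\bar y)$ and the $G$-invariant functions are exactly the constants. This gives a canonical isomorphism $(\pi_* \Ql)^G \cong \Ql_{X/G}$ of étale sheaves on the quotient.

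Finally, since $\Ql$ has characteristic zero, $|G|$ is invertible and the averaging idempotent $e = |G|^{-1} \sum_{g \in G} g$ splits every $\Ql[G]$-module as $M = M^G \oplus (1-e)M$. Hence taking $G$-invariants is an exact functor on $\Ql[G]$-modules and on complexes of sheaves of $\Ql[G]$-modules, and in particular commutes with $H^i_c(X/G, -)$. Combining this with the previous two steps yields
\[
H^i_c(X/G, \Ql) = H^i_c\!\bigl(X/G, (\pi_* \Ql)^G\bigr) = H^i_c(X/G, \pi_* \Ql)^G = H^i_c(X, \Ql)^G,
\]
as desired.

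The main thing to verify carefully, though it is not really an obstacle, is the sheaf-level identification $(\pi_* \Ql)^G = \Ql_{X/G}$; this reduces to the standard description of stalks of $\pi_* \Ql$ as functions on geometric fibers together with the basic property that fibers of a geometric quotient by a finite group are $G$-orbits. Everything else is formal bookkeeping with Leray and the invariants functor in characteristic zero.
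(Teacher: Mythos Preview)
Your proof is correct and follows essentially the same route as the paper: both use that $\pi$ is finite so $R\pi_!\Ql=R\pi_*\Ql$, identify $(\pi_*\Ql)^G\cong\Ql$ by a stalkwise check over the orbit fibers, and then combine Leray with the exactness of $(-)^G$ on $\Ql[G]$-modules to conclude. The only cosmetic differences are that you spell out the averaging idempotent and the degree-zero concentration of $R\pi_*\Ql$, while the paper phrases the stalk computation via proper base change; neither changes the argument.
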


\begin{proof}Let $\pi$ be the projection $X \to X/G$. The morphism $\pi$ is finite, so $R \pi_* \Ql \cong R \pi_! \Ql$, and the fiber of $\pi$ over any point consists of a single orbit. To check that the adjunction map $\Ql \to  (R \pi_* \Ql)^G = (R \pi_! \Ql)^G$ of sheaves on $X/G$ is an isomorphism, it suffices to check it on stalks, so by proper base change \cite[XIII, Corollary 5.2(iii)]{sga4-3} it suffices to check when $X$ is a single orbit and $X/G$ is a single point, where it is obvious. Therefore 

\[ H^i_c(X/G, \Ql)= H^i_c(X/G, R \pi_! \Ql)^G = H^i_c(X/G, (R\pi_! \Ql)^G)=   H^i_c(X,\Ql)^G  \] by the Leray spectral sequence with compact support \cite[XVII, Proposition 5.2.9]{sga4-3}.
\hfill $\Box$\end{proof}

\begin{proof}[Proof of Proposition \ref{deligne}]
Assume $H^{2n+1-k}_{\oo} ( \overline{M}_{1,n, \mathbb F_p} ) = 0$. We will show that all eigenforms in $S_k(\Gamma(1))$ have Hecke eigenvalue divisible by $p$. If they are divisible by $p$, they cannot be prime to $p$, giving the desired contradiction.

Let $m \geq 3$ be prime to $p$. Following Deligne \cite[p. 151]{deligne69}, let $M_{m, \mathbb F_p}$ be the fine moduli space of elliptic curves with full level $m$ structure over $\mathbb F_p$, and let $f_m: E_{m,\mathbb F_p}\to M_{m,\mathbb F_p}$ be the universal family. These spaces exist and they form a smooth proper family of elliptic curves over a smooth scheme, see e.g. \cite[Corollary 4.7.2]{katzmazur}. The group $GL_2(\mathbb Z/m)$ acts on $M_{m,\mathbb F_p}, E_{m,\mathbb F_p}$, and the $n-1$st power $E_{m,\mathbb F_p}^{n-1}$ of $E_{m,\mathbb F_p}^{n-1}$ over $M_{m,\mathbb F_p}$. The quotient $E_{m,\mathbb F_p}^{n-1} / GL_2(\mathbb Z/m)$ loses the level structure and becomes simply the coarse moduli space of elliptic curves with $n-1$ additional marked points (not necessarily distinct), hence is birational to $\overline{M}_{1,n, \mathbb F_p}$, because $M_{1,n,\mathbb F_p}$ is an open subset of both $\overline{M}_{1,n,\mathbb F_p}$ and $E_{m,\mathbb F_p}^{n-1} / GL_2(\mathbb Z/m)$.

As $H^{2n+1-k}_{\oo} ( \overline{M}_{1,n, \mathbb F_p} ) = 0$, by Proposition \ref{birational},  $H^{2n+1-k}_{\oo} ( E_{m,\mathbb F_p}^{n-1} / GL_2(\mathbb Z/m) ) = 0$. By definition, all eigenvalues of $\Frob_p$ on $H^{2n+1-k}_{c} ( E_{m,\overline{\mathbb F}_p}^{n-1} / GL_2(\mathbb Z/m),\Ql ) $ divide $p^{n-1}$ in the ring of algebraic integers.  That cohomology group is the same as the $GL_2(\mathbb Z/m)$-invariant part $H^{2n+1-k}_{c} ( E_{m,\mathbb F_p}^{n-1} ,\Ql )^{ GL_2(\mathbb Z/m)} $ by Lemma \ref{obvious}, so all eigenvalues of $\Frob_p$ on that divide $p^{n-1}$. Because $E_{m,\mathbb F_p}^{n-1}$ is smooth, by Poincar\'e duality \cite[XVIII, Theorem 3.2.5]{sga4-3}, all eigenvalues of Frobenius acting on $H^{k-1} ( E_{m,\overline{\mathbb F}_p}^{n-1} ,\Ql )^{ GL_2(\mathbb Z/m)} $ are divisible by $p$ in the ring of algebraic integers.

By the degeneration of the Leray spectral sequence \cite[Proposition 2.4 and 2.6.4]{deligne68}, 

\[H^{k-1} \left( E_{m,\overline{\mathbb F}_p}^{n-1} ,\Ql \right)^{GL_2(\mathbb Z/m)}\] contains as a summand \[H^1\left(M_{m, \overline{\mathbb F}_p},R^{k-2} \left( f_m^{n-1}\right)_*  \mathbb Q_\ell \right)^{GL_2(\mathbb Z/m)}.\] This, by the K\"{u}nneth formula,  \cite[XVII, Theorem 5.4.3]{sga4-3} contains as a summand  \[H^1\left(M_{m, \overline{\mathbb F}_p}, (R^1 f_{m*} \mathbb Q_\ell)^{\otimes k-2}  \otimes (R^0 f_{m*} \mathbb Q_\ell)^{(n-1)-(k-2)} \right)^{GL_2(\mathbb Z/m)} = H^1\left(M_{m, \overline{\mathbb F}_p}, (R^1 f_{m*} \mathbb Q_\ell)^{\otimes k-2} \right)^{GL_2(\mathbb Z/m)}.\]  This contains as a summand \[H^1\left(M_{m, \overline{\mathbb F}_p}, \Sym^{k-2} (R^1 f_{m*} \mathbb Q_\ell)\right)^{GL_2(\mathbb Z/m)},\] which has as a quotient the parabolic cohomology \[\tilde{H}^1\left(M_{m, \overline{\mathbb F}_p}, \Sym^{k-2} (R^1 f_{m*} \mathbb Q_\ell)\right)^{GL_2(\mathbb Z/m)}.\]  So all eigenvalues of Frobenius on $\tilde{H}^1\left(M_{m, \overline{\mathbb F}_p}, \Sym^{k-2} (R^1 f_{m*} \mathbb Q_\ell)\right)^{GL_2(\mathbb Z/m)}$ are divisible by $p$. Note that we need the assumption $n \geq k+1$ so that $(n-1)-(k-2)\geq 0$.

Deligne \cite[p. 156]{deligne69} defines an action of the Hecke operator $T_p$ on $\tilde{H}^1(M_{m, \overline{\mathbb F}_p}, \Sym^{k-2} (R^1 f_{m*} \mathbb Q_\ell))$, and shows that $T_p$ acts as $F + I_p^* V$ where $F$ is the geometric Frobenius, $V$ is its transpose, and $I_p^*$ is the action of the diagonal element $\begin{pmatrix} p & 0 \\ 0 & p \end{pmatrix} \in GL_2(\mathbb Z/m)$ \cite[Proposition 4.8]{deligne69}. The action of $I_p^*$ factors through the action of $GL_2(\mathbb Z/m)$, so on the $GL_2(\mathbb Z/m)$-invariant subspace, $T_p= F+V$. Note that all eigenvalues of $F$ are divisible by $p$, and $V$ is the transpose of $F$ so all eigenvalues of $V$ are divisible by $p$. Thus because $F$ and $V$ commute \cite[Proposition 4.8(3)]{deligne69}, all eigenvalues of $T_p$ on $\tilde{H}^1(M_{m, \overline{\mathbb F}_p}, \Sym^{k-2} (R^1 f_{m*} \mathbb Q_\ell))^{GL_2(\mathbb Z/m)}$ are divisible by $p$.

By \cite[Corollary 4.2]{deligne69}, the sheaf $R^1 \tilde{a}(M_m,\Sym^{k-2}(R^1f_{m*}\mathbb Q_\ell))$ on $\Spec \mathbb Z[1/m\ell]$ is lisse, and its stalk at the geometric point $\overline{\mathbb F}_p$ is $\tilde{H}^1(M_{m, \overline{\mathbb F}_p}, \Sym^{k-2} (R^1 f_{m*} \mathbb Q_\ell))$. The Hecke operators and $GL_2(\mathbb Z/m)$ also act on this sheaf. Because the sheaf is lisse, the stalks at $\overline{\mathbb F}_p$ and $\overline{\mathbb Q}$ are isomorphic as vector spaces with a $GL_2(\mathbb Z/m)$ and Hecke operator action. Because all Hecke eigenvalues on the $GL_2(\mathbb Z/m)$-invariant part the stalk of $\tilde{H}^1(M_{m, \overline{\mathbb F}_p}, \Sym^{k-2} (R^1 f_{m*} \mathbb Q_\ell))$ at $\overline{\mathbb F}_p$ are divisible by $p$, all Hecke eigenvalues on the $GL_2(\mathbb Z/m)$-invariant part of the stalk of $\tilde{H}^1(M_{m, \overline{\mathbb F}_p}, \Sym^{k-2} (R^1 f_{m*} \mathbb Q_\ell))$ at $\overline{\mathbb Q}$ are divisible by $p$.

Deligne \cite[pp. 154-158]{deligne69} uses several different spaces $W$ of modular forms, indexed by subscripts and superscripts. The upper-left index is the weight, which is equal to the usual weight of modular forms minus two. The lower-left index is the level (where we always take modular forms for the full congruence subgroup $\Gamma(n)$ of that level). The lower-right index defines the coefficient field, where $_\ell$ represents $\mathbb Q_\ell$, $\infty$ represents $\mathbb C$, and an omitted subscript represents $\mathbb Q$. The upper-right superscript may be used to denote the invariants under a group action (a standard notation, which we have used previously in this proof).

By \cite[Corollary 4.2]{deligne69}, the stalk of $\tilde{H}^1(M_{m, \overline{\mathbb F}_p}, \Sym^{k-2} (R^1 f_{m*} \mathbb Q_\ell))$ at $\overline{\mathbb Q}$ is ${}^{k-2}_m W_\ell$, and the $GL_2(\mathbb Z/m)$-invariant part is ${}^{k-2}_m W ^{GL_2(\mathbb Z/m)}_\ell$, which is the tensor product of ${}^{k-2}_m W ^{GL_2(\mathbb Z/m)}$ with $\mathbb Q_\ell$. By definition \cite[p. 158]{deligne69}, ${}^{k-2}_m W ^{GL_2(\mathbb Z/m)}$ is ${}^{k-2}_1 W$, which tensored up with $\mathbb C$ is  ${}^{k-2}_1W_\infty$. So all eigenvalues of $T_p$ on ${}^{k-2}_1W_\infty$ are algebraic integers divisible by $p$. By \cite[Proposition 3.19]{deligne69},  ${}^{k-2}_1W_\infty$ contains $S^k(\Gamma(1))$ as a summand, so all eigenvalues of $T_p$ on $S^k(\Gamma(1))$ are divisible by $p$, as desired.
\hfill $\Box$
 \end{proof}
 
 \begin{theo}\label{main1} Let $k$ be a natural number and let $n \geq k-1$. If the space $S_k(\Gamma(1))$ of cusp forms of weight $k$ and level $1$ contains an eigenform that is ordinary at $p$, then $\overline{M}_{1,n, \mathbb F_p} $ is not uniruled.
 
 \end{theo}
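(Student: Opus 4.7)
The plan is to proceed by induction on $n$ with $k$ fixed. The base case $n = k - 1$ follows from Proposition \ref{deligne} and Corollary \ref{uniruled} applied in top degree; the inductive step uses the forgetful morphism $\pi: \overline{M}_{1,n} \to \overline{M}_{1,n-1}$, whose generic fiber is a smooth genus $1$ curve (a standard fact, as $\overline{M}_{1,n}$ is the universal curve over $\overline{M}_{1,n-1}$).

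For the base case $n = k - 1$, I apply Proposition \ref{deligne} with $n = k - 1$, where the hypothesis $k \leq n+1$ holds with equality, obtaining $H^{k-1}_{\oo}(\overline{M}_{1,k-1,\mathbb F_p}) \neq 0$. Since $\dim \overline{M}_{1,k-1} = k - 1$, this non-vanishing is in top degree, and Corollary \ref{uniruled} yields that $\overline{M}_{1,k-1,\mathbb F_p}$ is not uniruled over $\overline{\mathbb F}_p$.

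For the inductive step, fix $n > k - 1$ and assume that $\overline{M}_{1,n-1,\mathbb F_p}$ is not uniruled. Suppose for contradiction that $\overline{M}_{1,n,\overline{\mathbb F}_p}$ is uniruled, so there is a dominant rational map $f: V \times \mathbb P^1 \dashrightarrow \overline{M}_{1,n,\overline{\mathbb F}_p}$ with $\dim V = n - 1$; consider the composition $\pi \circ f: V \times \mathbb P^1 \dashrightarrow \overline{M}_{1,n-1,\overline{\mathbb F}_p}$, which is again dominant. Either $\pi \circ f$ is non-constant on $\{v\} \times \mathbb P^1$ for general $v$, in which case these rational curves form a family covering $\overline{M}_{1,n-1}$, making it uniruled and contradicting the inductive hypothesis; or $\pi \circ f$ is constant on each $\{v\} \times \mathbb P^1$, so $f(\{v\} \times \mathbb P^1)$ lies in a single fiber of $\pi$, which for general $v$ is a smooth genus $1$ curve. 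Since $\mathbb P^1$ admits no non-constant morphism to a smooth genus $1$ curve, $f$ must be constant on $\{v\} \times \mathbb P^1$ for general $v$, forcing the image of $f$ to have dimension at most $\dim V = n - 1 < n$, contradicting dominance.

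The main step requiring care is ensuring that $f|_{\{v\} \times \mathbb P^1}$ lands in a fiber of $\pi$ whose underlying curve is smooth, so that the ``no rational curves in genus $1$'' argument applies. This follows by taking $v$ general: the image then meets the interior $M_{1,n}$ of $\overline{M}_{1,n}$, and the corresponding fiber of $\pi$ lies over the interior of $\overline{M}_{1,n-1}$, where the universal curve is smooth of genus $1$.
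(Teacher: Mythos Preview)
Your proof is correct and follows essentially the same route as the paper: the base case $n=k-1$ is handled identically via Proposition~\ref{deligne} and Corollary~\ref{uniruled}, and the passage to larger $n$ uses the forgetful map together with the fact that its generic fiber (an elliptic curve) contains no rational curves. The only difference is organizational: the paper forgets all $n-(k-1)$ points at once and invokes the standard fact that a variety fibered over a non-uniruled base with fibers containing no rational curves is itself non-uniruled, whereas you unfold this into an induction forgetting one point at a time and spell out the dichotomy argument explicitly.
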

 
 \begin{proof}By Proposition \ref{deligne}, in this case
 \[H^{k-1}_{\oo} ( \overline{M}_{1,{k-1}, \overline{\mathbb F}_p} ) \neq 0.\]
 hence by Corollary \ref{uniruled}, $M_{1,{k-1}, \mathbb F_p}$ is not uniruled.  Because $M_{1,n,\mathbb F_p}$ dominates $M_{1,{k-1}, \mathbb F_p}$ via the map that forgets $n-(k-1)$ points (whose general fiber is an abelian variety and thus contains no rational curves), $M_{1,n,\mathbb F_p}$ is not uniruled. 
 \hfill $\Box$
 \end{proof}
 
 \section{Calculations with Modular Forms}

For any particular $p$ and $k$ we can compute the eigenvalues of the Hecke operator $T_p$ acting on $S_k(\Gamma(1))$ and determine whether any eigenforms are ordinary. For instance it appears that for almost all $p$, the Ramanujan $\Delta$ function, a cusp form of weight $12$, is ordinary. This is true for all $p<10^{10}$ except $p=2,3,5,7,2411,$ and $7758337633$ \cite{lr10}. For all such $p$, $\overline{M}_{1,11,\mathbb F_p}$ is not uniruled. This is best possible in those characteristics as $\overline{M}_{1,10}$ is always unirational (as mentioned earlier). However a method based only on modular forms of weight $12$ is unlikely to work in general, as it is not even known that there are infinitely many $p$ at which $\Delta$ is ordinary. Yet we can still prove a general existence result for an ordinary form, albeit of considerably higher weight than $12$:

\begin{prop}\label{serre existence} Let $p>11$ be a prime. Then the space $S_{p-1}(\Gamma(1))$ of cusp forms of weight $p-1$ and level $1$ contains a $p$-ordinary eigenform.

\end{prop}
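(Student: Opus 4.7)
The plan is to translate the proposition into a statement about the $U_p$-operator on mod-$p$ modular forms and then invoke the Serre--Swinnerton-Dyer theory of the theta operator to derive a contradiction. On weight-$(p-1)$ level-$1$ forms the Hecke operator decomposes as $T_p = U_p + p^{p-2} V_p$ on $q$-expansions, where $U_p\bigl(\sum a_n q^n\bigr) = \sum a_{pn} q^n$ and $V_p\bigl(\sum a_n q^n\bigr) = \sum a_n q^{pn}$; since $p - 2 \geq 1$ for $p \geq 3$, this gives $T_p \equiv U_p \pmod p$. An eigenform $f \in S_{p-1}(\Gamma(1))$ is $p$-ordinary precisely when its mod-$p$ reduction is a $U_p$-eigenvector with nonzero eigenvalue, so it suffices to show $U_p$ is not nilpotent on $S_{p-1}(\Gamma(1), \overline{\mathbb F}_p)$. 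For $p > 11$ this space is nonzero because $\dim_{\mathbb C} S_{p-1}(\Gamma(1)) = \lfloor (p-1)/12 \rfloor \geq 1$.

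I would then argue by contradiction: assume $U_p$ is nilpotent on $S_{p-1}(\Gamma(1), \overline{\mathbb F}_p)$. Then $\ker U_p$ is nonzero, so there exists a nonzero $g$ with $U_p g = 0$, i.e., $a_{pn}(g) = 0$ for all $n \geq 0$. By Fermat's Little Theorem, $\sum a_n q^n \equiv \sum n^{p-1} a_n q^n \pmod p$ restricted to indices with $p \nmid n$, so $\theta^{p-1} g \equiv g \pmod p$ as $q$-expansions, where $\theta = q \, d/dq$ is Serre's theta operator. Equivalently, $g$ lies in a full $\theta$-cycle of length dividing $p - 1$.

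The final step invokes the Swinnerton-Dyer theorem on filtrations: $w(\theta h) = w(h) + p + 1$ unless $p \mid w(h)$, in which case $w(\theta h) = w(h) + p + 1 - m(p - 1)$ for some integer $m \geq 1$. Tracking the cycle $w_i := w(\theta^i g)$ for $i = 0, 1, \ldots, p - 1$ with $w_{p-1} = w_0 = w(g)$, balancing total rise against total fall gives $\sum_i m_i = p + 1$, with drops $m_i \geq 1$ occurring only when $w_{i-1} \equiv 0 \pmod p$. The hard part is then the combinatorial case analysis: one must show no such cycle closes up given the constraints $w(g) \leq p - 1$, $w(g)$ even (level-$1$ forms have even weight), and $w(g) > 0$ (since $g$ is a nonzero cusp form). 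Using the Jochnowitz-style classification of full $\theta$-cycles to constrain both the positions and the magnitudes $m_i$ of the allowed drops, every candidate cycle with initial filtration in the range $\{2, 4, \ldots, p-1\}$ is ruled out, producing the desired contradiction and hence an eigenform $f \in S_{p-1}(\Gamma(1))$ with $a_p(f) \not\equiv 0 \pmod p$.
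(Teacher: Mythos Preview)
Your initial reductions are correct and match the paper's: on weight $p-1$ one has $T_p\equiv U_p\pmod p$, ordinarity of an eigenform is equivalent to its $U_p$-eigenvalue being nonzero mod $p$, and $\dim S_{p-1}(\Gamma(1))\geq 1$ for $p>11$. From this point the two arguments diverge. The paper does not attempt any $\theta$-cycle analysis at all; it simply cites Serre's theorem that the operator $U$ acts \emph{bijectively} on the space $\tilde{M}_{p-1}$ of mod-$p$ $q$-expansions of weight $p-1$ level-$1$ forms (Serre, \emph{Formes modulaires et fonctions z\^eta $p$-adiques}, Th\'eor\`eme~6(ii)), so every eigenvalue of $T_p$ on the nonzero image of $S_{p-1}(\Gamma(1),\mathbb Z)$ is a unit mod $p$. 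That is the whole proof.

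Your route---assume $U_p$ nilpotent, take $g\in\ker U_p$, observe $\theta^{p-1}g=g$, and analyze the resulting $\theta$-cycle via filtration bookkeeping---is a legitimate alternative strategy in principle, but as written it has a real gap: you declare that ``every candidate cycle with initial filtration in $\{2,4,\ldots,p-1\}$ is ruled out'' by a Jochnowitz-style case analysis, yet you do not carry out that analysis. This is precisely the substance of the claim (it amounts to reproving, for cusp forms of filtration $\leq p-1$, the special case of Serre's bijectivity theorem that the paper quotes), and it is not a one-line check: one must control not only where the drops occur but how large each $m_i$ can be, and the constraint $\sum m_i=p+1$ by itself does not immediately exclude all possibilities. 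So either complete the cycle combinatorics explicitly, or replace that paragraph with a direct appeal to Serre's bijectivity result, which is both shorter and what the paper does.
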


\begin{proof} 
In \cite[1.2]{serre73}, Serre defines $\tilde{M}_k$ as the space of power series mod $p$ that are the reductions mod $p$ of modular forms of weight $k$ and level $1$ with integer coefficients. So the lattice inside $S_{p-1}(\Gamma(1))$ consisting of modular forms of weight $p-1$ and level $1$ with integer coefficients maps to $\tilde{M}_{p-1}$, and this map commutes with the Hecke operator $T_p$.

On $\tilde{M}_{p-1}$, the action of $T_p$ is the same as the action of $U$ \cite[2.2]{serre73}, which acts bijectively \cite[Theorem 6(ii)]{serre73}, so all its eigenvalues are nonzero (mod $p$). Hence to show that one of the eigenvalues of $T_p$ on $S_{p-1}(\Gamma(1))$ is nonzero mod $p$, it suffices to show that the image of this map is nonzero. This follows from the existence of any cusp form of weight $p-1$ with integral coefficients, since we can always divide it by $p$ until at least one coefficient is nonzero mod $p$.

The dimension of the space of cusp forms of weight $k$ and level $1$ is $\lfloor k/12\rfloor$, unless $k \equiv 2$ mod $12$, in which case it is $\lfloor k/12 \rfloor-1$. This is easily seen to be greater than $1$ if $k>14$ or $k=12$, and so is greater than $1$ when $k=p-1$ for any primes $p>11$. Since there is an integral basis for the space of cusp forms (for an elementary proof, observe that the basis for all modular forms constructed in \cite[Chapter 5, Proposition 1]{hidalfunction} restricts to a basis of cusp forms), there exists a cusp form of weight $p-1$ with integer coefficients for all such $k$.
\hfill $\Box$
\end{proof}

\begin{theo}\label{main2}Let $p$ be a prime and $n$ a natural number. If $p>11$ and $n\geq p-2$ or $p=11$ and $n\geq 11$, then $\overline{M}_{1,n,\mathbb F_p}$ is not uniruled. \end{theo}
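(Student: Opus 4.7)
The plan is to combine Theorem \ref{main1} with the existence results on ordinary eigenforms, handling the generic case $p>11$ and the exceptional case $p=11$ separately.

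First I would handle the range $p>11$ with $n\geq p-2$. Proposition \ref{serre existence} supplies a $p$-ordinary eigenform in $S_{p-1}(\Gamma(1))$, i.e.\ a form of weight $k=p-1$. The hypothesis $n\geq p-2 = k-1$ is exactly what Theorem \ref{main1} requires, so that theorem directly yields the non-uniruledness of $\overline{M}_{1,n,\mathbb{F}_p}$ in this range.

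For the boundary case $p=11$, Proposition \ref{serre existence} does not apply (it is stated only for $p>11$), so I would instead exhibit a specific ordinary eigenform of low weight. The natural candidate is Ramanujan's $\Delta\in S_{12}(\Gamma(1))$, for which the eleventh Fourier coefficient is $\tau(11)=534612\equiv 1\pmod{11}$, so $\Delta$ is ordinary at $11$. Taking $k=12$, the hypothesis $n\geq 11=k-1$ of Theorem \ref{main1} is satisfied whenever $n\geq 11$, and that theorem again gives non-uniruledness.

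There is really no hard step here: the whole content has been packaged into Theorem \ref{main1} and Proposition \ref{serre existence}, and the argument is just a matter of choosing, for each prime $p$, an ordinary weight $k\leq n+1$. The only part requiring any care is ensuring that the exceptional prime $p=11$ is covered, which is handled by the explicit congruence $\tau(11)\equiv 1 \pmod{11}$. No computation beyond this one residue is needed, and no auxiliary lemma beyond what the paper has already established.
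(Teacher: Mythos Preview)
Your proof is correct and follows exactly the same route as the paper: invoke Proposition \ref{serre existence} for $p>11$ to get an ordinary eigenform of weight $k=p-1$, use the Ramanujan $\Delta$ function for $p=11$, and then apply Theorem \ref{main1} with the matching bound $n\geq k-1$. The only cosmetic difference is that you verify $\tau(11)\equiv 1\pmod{11}$ explicitly rather than citing a reference.
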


\begin{proof} If  $p>11$, then by Proposition \ref{serre existence}, $S_{p-1}(\Gamma(1))$ contains a $p$-ordinary eigenform. If $p=11$, then $S_{12}(\Gamma(1))$ contains a $p$-ordinary eigenform (the Ramanujan $\Delta$ function) by direct computation (e.g. \cite[abstract]{lr10}). Hence by Theorem \ref{main1}, $\overline{M}_{1,n,\mathbb F_p}$ is not uniruled. 
\hfill $\Box$
\end{proof}

We suggest two open questions that may be interesting:

\begin{question} {\rm For any $p$, is $\overline{M}_{1,n, \mathbb F_p}$ uniruled (or even unirational) for infinitely many $n$?} \end{question}

We have given a negative answer for $p>7$, so the remaining possibilities are $p=2,3,5,7$. A tantalizing fact is that our proof does not just fail by random chance - using the theory of congruences between modular forms, one can show without calculation that for $p<11$ there are no $p$-ordinary cusp forms of level $1$ and any weight $k$ and thus Theorem \ref{main1} will never prove that $M_{1,n,\mathbb F_p}$ is not uniruled.

Indeed, Hida showed that the dimension of the space of $p$-ordinary cusp forms of weight $k$ on $\Gamma_0(p)$ depends only on $p$ mod $k-1$ for $k\geq 2$ \cite[Chapter 7, Theorem 1]{hidalfunction}, and is equal to the dimension of the space of $p$-ordinary cusp forms on $\Gamma(1)$ for $k\geq 3$ \cite[Chapter 7, Proposition 2]{hidalfunction}. Since for each residue class mod $p-1$, we may find a representative in the interval $[3,p+1]$, it follows that if there is any $p$-ordinary cusp form at all, then there is one of weight $\leq p+1$. But the cusp form of smallest weight is $12$, so if there are any $p$-ordinary cusp forms then $p\geq 11$.

If this reason can be turned into a ``geometric" explanation of the $p$-divisibility of the eigenvalues somehow, then that geometric explanation might prove a stronger statement as well, possibly even unirationality.
Alternatively, if a plausible higher-dimensional analogue of Shioda's conjecture \cite[Conjecture on p. 167]{shioda77} is made, this argument could be used to show that $\overline{M}_{1,g}$ is unirational in characteristic $p<11$ for all $g$ conditionally on that conjecture.

\vspace{10pt}

The next question was actually the original question that motivated this work. We were not able to solve it but in considering related problems we were led to the study of $\overline{M}_{1,n}$:

\begin{question} {\rm For any $p$, is $\overline{M}_{g, \mathbb F_p}$ uniruled for infinitely many $g$?} \end{question}

We could also ask about the validity of the characteristic $p$ analogue of Severi's conjecture \cite[p. 880]{severiconjecture}, which would be that $\overline{M}_{g, \mathbb F_p}$ is unirational for all $g$.

Applying the same method to this problem would require intensive study of the non-tautological cohomology of $\overline{M}_g$, so one might seek other methods to resolve the problem. However, we are not aware of any other method for showing non-unirationality of a space in characteristic $p$, nor any method for showing that spaces are in fact uniruled that could conceivably apply to $\overline{M}_g$ for large $g$.

\providecommand{\bysame}{\leavevmode\hbox to3em{\hrulefill}\thinspace}
%
%

\bibliographystyle{amsalpha}
\bibliographymark{References}
\def\cprime{$'$}

\end{document}